\DeclareMathOperator{\Hom}{Hom}
\DeclareMathOperator{\rank}{rank}
\DeclareMathOperator{\MT}{M}
\DeclareMathOperator{\Id}{Id}
\newcommand{\IZ}{\mathbb{Z}}
\newcommand{\IR}{\mathbb{R}}
\newcommand{\IC}{\mathbb{C}}
\newcommand{\Map}[3]{#1 \colon #2 \rightarrow #3}
\newcommand{\Sl}[1]{\operatorname{SL}(#1)}
\newcommand{\Book}{\operatorname{B}}
\newcommand{\Comp}[1]{\operatorname{Comp}(#1)}
\theoremstyle{plain}
\newtheorem{thm}{Theorem}[section]
\newtheorem{lem}[thm]{Lemma}
\theoremstyle{definition}
\newtheorem{defn}[thm]{Definition}
\newtheorem{constr}[thm]{Construction}
\newtheorem{exm}[thm]{Example}
\theoremstyle{remark}
\newtheorem{rem}[thm]{Remark}
\newtheorem{que}[thm]{Question}
\begin{document}
\title{A note on the existence of essential tribranched surfaces}
\author{Stefan Friedl}
\address{Department of Mathematics, University of Regensburg, Germany}
\email{sfriedl@gmail.com}
\author{Takahiro Kitayama}
\address{Department of Mathematics, Tokyo Institute of Technology, Japan}
\email{kitayama@math.titech.ac.jp }
\author{Matthias Nagel}
\address{Department of Mathematics, University of Regensburg, Germany}
\email{matthias.nagel@mathematik.uni-regensburg.de}
\urladdr{http://homepages.uni-regensburg.de/~nam23094/}
\begin{abstract}
The second author and Hara introduced the notion of an essential tribranched surface that is a generalisation of the notion of an essential embedded surface in a 3-manifold. We show that any 3-manifold for which the fundamental group has at least rank four admits an essential tribranched surface.
\end{abstract}
\maketitle

\section{Introduction}
Let $M$ be a $3$-manifold. Here and throughout the paper all $3$-mani\-folds
are assumed to be compact, connected and orientable.  A properly embedded,
connected, orientable surface $\Sigma\ne S^2$ in $M$ is called
\emph{essential} if the inclusion induced map $\pi_1(\Sigma)\to \pi_1(N)$ is a
monomorphism and if $\Sigma$ is not boundary-parallel.  A
$3$-manifold is called \emph{Haken} if it is irreducible and if it admits an
essential surface.

Haken mani\-folds play a major role in 3-manifold topology. For example, Waldhausen \cite{Wa68} gave a solution to the homeomorphism problem for Haken 3-mani\-folds, Thurston \cite{Th82} proved the Geometrisation Theorem for Haken 3-mani\-folds and recently Wise \cite{Wi12} showed that  Haken hyperbolic 3-mani\-folds are virtually fibred.
 The principle in each case is that the existence of an essential (in \cite{Wi12} geometrically finite)  surface leads to a hierarchy, which allows a proof by induction. 
 
It is well-known and straightforward to show that any irreducible $3$-manifold $M$ with $b_1(M)\geq 1$ is Haken. 
This implies in particular that any irreducible $3$-manifold $M\ne D^3$ with non-empty boundary is Haken.
A particularly interesting source of essential surfaces in a $3$-manifold $M$ is provided by the work Culler and Shalen \cite{Culler83}. They showed how ideal points of the character variety $\Hom( \pi_1(M), \Sl{2, \IC} ) // \Sl{2, \IC}$ give rise to essential surfaces in  $M$. 

Despite the abundance of Haken mani\-folds,  there are many examples of closed, irreducible $3$-mani\-folds that are not Haken. For example most Dehn surgeries along the Figure-8 knot are hyperbolic but not Haken.
We refer to \cite{Ha82,HT85,Ag03} and also the discussion in \cite[(C.17)]{AFW12} for details and many more examples.
It is still an open, and very interesting question, whether or not a `generic' $3$-manifold is Haken.

Recently  Hara and the second author \cite{Kitayama14} generalised the work of
Culler and Shalen \cite{Culler83} to higher dimensional character varieties.
More precisely, let $M$ be an irreducible $3$-manifold with non-empty boundary and let $n \geq 3$ be a natural number.
It is shown in \cite{Kitayama14}
that any ideal point of the character variety
$\Hom( \pi_1(M), \Sl{n, \IC}) // \Sl{n, \IC}$ gives rise to an essential \emph{tribranched} surface.
For $n = 3$ this even holds if $M$ has no boundary.
We will recall the definition of an essential tribranched surface in Section \ref{section:tribranched}.
For the moment  it suffices to know that an essential surface is also an essential tribranched surface, but as the name already suggests, tribranched surfaces are allowed to have a certain controlled type of branching.

The following question was raised in \cite[Question~6.5]{Kitayama14}

\begin{que}
Does every aspherical 3-manifold contain an essential tribranched surface?
\end{que}

The goal of this paper is to give an affirmative answer  for `most' $3$-mani\-folds.
More precisely, given a finitely generated group $\pi$ we denote by $\rank \pi$ the minimal number of generators of $\pi$. The following is  our main theorem.

\begin{thm}\label{mainthm}
Let $M$ be a closed $3$-manifold with $\rank \pi_1(M) \geq 4$. Then $M$
admits an essential tribranched surface.
\end{thm}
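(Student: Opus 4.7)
The overall strategy is to apply the theorem of Hara and the second author from \cite{Kitayama14}: any ideal point of the $\Sl{3,\IC}$-character variety
\[
X_3(M) := \Hom(\pi_1(M), \Sl{3,\IC}) /\!/ \Sl{3,\IC}
\]
of a closed $3$-manifold $M$ produces an essential tribranched surface in $M$. Such an ideal point exists whenever some irreducible component of $X_3(M)$ has positive dimension. Thus the whole theorem reduces to the dimension bound
\[
\rank \pi_1(M) \geq 4 \;\Longrightarrow\; \dim X_3(M) \geq 1.
\]

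The plan is to dispatch two easy subcases first. If $M$ is not prime, Grushko's theorem implies that $\rank$ is additive over connected sum, so we may write $M = M_1 \# M_2$ with both summands having non-trivial fundamental group. The free product structure $\pi_1(M) = \pi_1(M_1) * \pi_1(M_2)$ yields $R_3(\pi_1(M)) = R_3(\pi_1(M_1)) \times R_3(\pi_1(M_2))$; choosing in each factor a representation sending some element to a non-central matrix in $\Sl{3,\IC}$ produces two positive-dimensional conjugation orbits, whose product modulo the single remaining global $\Sl{3,\IC}$-action gives $\dim X_3(M) \geq 1$. If $M$ is prime with $b_1(M) \geq 1$ then $M$ is already Haken by the standard argument, and any essential surface is in particular an essential tribranched surface.

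The remaining main case is $M$ closed, prime, with $b_1(M) = 0$ and $g := \rank \pi_1(M) \geq 4$. Here the plan is to select a presentation $\pi_1(M) = \langle x_1,\dots,x_g \mid r_1,\dots,r_m \rangle$ realizing the rank and to produce a positive-dimensional family of $\Sl{3,\IC}$-representations through a carefully chosen reducible representation $\rho_0$ whose image lies in a parabolic subgroup $P \subset \Sl{3,\IC}$. The local dimension of the representation variety at $\rho_0$ is bounded below by Fox calculus on the given presentation via
\[
\dim_{\rho_0} R_3(\pi_1(M)) \;\geq\; 8g - 8m + \dim H^0(\pi_1(M), \operatorname{Ad}\rho_0),
\]
and Poincaré duality on the closed oriented $3$-manifold (yielding $h^1 = h^2$ for the adjoint local system) then converts this estimate into a lower bound on $\dim X_3(M)$.

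The principal obstacle is this last case: every presentation of a closed $3$-manifold has deficiency $g - m \leq 0$, so the naive count $(g-m)(n^2-1) \leq 0$ is vacuous and the hypothesis $\rank \pi_1(M) \geq 4$ must be used substantively. The delicate point is to pick $\rho_0$ whose centralizer is large enough that $\dim H^0(\pi_1(M), \operatorname{Ad}\rho_0)$ together with the extra freedom coming from having at least four independent generators forces $\dim X_3(M) > 0$. Intuitively, with four or more generators one has enough room in $P \subset \Sl{3,\IC}$ to arrange $\rho_0$ so that the image of the ``relation map'' $\Sl{3,\IC}^g \to \Sl{3,\IC}^m$ is transverse to the identity in a positive-dimensional cokernel, which is precisely the obstruction that the balanced-deficiency condition fails to block once enough generators are available.
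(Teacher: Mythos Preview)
Your proposal takes a route entirely different from the paper's and, in the decisive case, is not a proof. The paper never touches character varieties: it builds the tribranched surface directly from an open book decomposition $M\cong\Book(F,\phi)$ with $\chi(F)<0$, using a pants decomposition $\mathcal{C}$ of the page and a Hatcher--Thurston path of pants decompositions from $\mathcal{C}$ to $\phi(\mathcal{C})$. The resulting $\Sigma$ is assembled from parallel copies of $F$, interpolating annuli, and the peripheral tori; every block turns out to be either a solid torus or an interval times a thrice- or four-times punctured sphere, hence has fundamental group of rank at most $3$. The hypothesis $\rank\pi_1(M)\ge 4$ is invoked only at this last step, to verify condition~(\ref{ESnoSurj}).

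Your argument instead hinges on the implication $\rank\pi_1(M)\ge 4 \Rightarrow \dim X_3(M)\ge 1$, and in the main case ($M$ prime, $b_1(M)=0$) you yourself label this the ``principal obstacle'' and offer only an ``intuitive'' sketch. That sketch cannot work as written: for a closed oriented $3$-manifold the adjoint coefficient system is self-dual, so the Poincar\'e duality you invoke gives $h^1(\operatorname{Ad}\rho)=h^2(\operatorname{Ad}\rho)$, and the expected dimension of $X_3(M)$ at \emph{every} representation is $h^1-h^2=0$. No choice of $\rho_0$ with large centraliser changes this count; obtaining a positive-dimensional component would require genuine excess (non-transversality) of the relation map, and nothing in your outline ties that phenomenon to the bound $\rank\ge 4$. (The displayed inequality $\dim_{\rho_0}R_3\ge 8g-8m+\dim H^0$ is also not the standard deformation-theoretic estimate.) Separately, in the reducible case you invoke \cite{Kitayama14}, but that result is formulated for irreducible $M$; the honest shortcut there is that the connect-sum $2$-sphere itself already satisfies all four conditions of Definition~\ref{Def:essential}.
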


We will construct these tribranched surfaces using  open book decompositions. In particular, our approach is quite different from the approach taken in \cite{Kitayama14}.

\subsection*{Conventions}
All $3$-mani\-folds  are assumed to be compact, connected and orientable, unless it says explicitly otherwise. Furthermore all self-diffeomorphisms of surfaces are assumed to be orientation preserving.
We identify $\IR/\IZ$ with $S^1$ via the map $t\mapsto e^{it}$.

\subsection*{Acknowledgements}
The first and the third author were  supported by the SFB 1085 `Higher Invariants' at the Universit\"at Regensburg funded by the Deutsche Forschungsgemeinschaft (DFG).
The second author was supported by JSPS KAKENHI Grant Number 26800032.

\section{Tribranched surfaces}\label{section:tribranched}
We recall the definition of a tribranched surface from \cite{Kitayama14}.
In this section $M$ will always denote a closed $3$-manifold.

\begin{defn} Let $\Sigma \subset M$ be a subset and $X$ another $3$-manifold.
\begin{enumerate}
\item For a subset $\Lambda$ of $X$ we say that an open set $U$ of $M$ 
is \emph{modelled} on $\Lambda \subset X$ if there exists an open set $V \subset X$ and a 
homeomorphism $\Map \phi U V$ with $\phi( \Sigma \cap U) = \Lambda \cap V$.
\item For a collection $\mathcal{A}$ of subsets of $X$
we say  $\Sigma$ is  \emph{locally modelled} on $\mathcal{A}$ 
if given any point $x \in \Sigma$ there exist an open neighbourhood $U_x$ of $x$ in $M$ and a subset $\Lambda_x \in \mathcal{A}$
such that $U_x$ is modelled on $\Lambda_x \subset X$.
\end{enumerate}
\end{defn}

\begin{exm}
A $2$-dimensional submanifold of a 3-manifold is a subset locally modelled on $\IR^2 \times \{0\}\subset \IR^3$.
\end{exm}

\begin{defn}
\begin{enumerate}
\item A \emph{pre-tribranched surface} $\Sigma$ in $M$ is a closed subset of $M$ locally modelled on the collection
of the following subsets:
\begin{enumerate}
\item $\IR^2 \times \{0\} \subset \IR^3$,
\item $\IR \times \{ r e^{2\pi i k/3} : k=0,1,2 \text{ and } r \in \IR_{\geq 0} \} \subset \IR \times \IC = \IR^3$,
\end{enumerate}
The collection of points which do not have an open neighbourhood 
modelled on $\IR^2\times \{0\} \subset \IR^3$ is called
the \emph{branching set} $C(\Sigma)$. 
Each component of $\Sigma\setminus C(\Sigma)$ is called a \emph{branch}
and each component of $M\setminus \Sigma$ is called a \emph{block}.
\item A \emph{tribranched surface} is a pre-tribranched surface such that
\begin{enumerate}
\item each branch is an orientable surface and
\item each component $C$ of the branching set $C(\Sigma)$ has an open neighbourhood $U$
modelled on 
\[ S^1 \times \{ r e^{2\pi i k/3} : k=0,1,2 \text{ and } r \in \IR_{\geq 0} \} \subset S^1 \times \IC.\]
\end{enumerate}
\end{enumerate}
\end{defn}

\begin{rem}
For a pre-tribranched surface $\Sigma$ the subset $C(\Sigma)$ is 
a $1$-dimensional submanifold of $M$. Each branch is a 
surface and each block is a 3-manifold.
\end{rem}

Later on we will also use the following definitions.

\begin{defn} Let $\Sigma$ be a tribranched surface in $M$. 
\begin{enumerate}
\item We say that a \emph{branch $S$ of $\Sigma$ lies in the closure of a block $N$} if $S$ is contained in the closure of $N$ in $M$. 
\item The \emph{compactification $S^c$ of a branch  $S$} is the abstract compact surface  whose interior is $S$.
\item The \emph{compactification $N^c$ of a block $N$} is the abstract compact 3-manifold whose interior is $N$.
\end{enumerate}
\end{defn}

Note that given a branch $S$ there exists a canonical map $S^c\to M$.
In general this  canonical map $S^c\to M$ might identify components of the boundary of $S^c$. Similarly, given a block $N$ there exists a canonical map $N^c\to M$.
Also,  if a branch $S$ lies in the closure of a block $N$, then there exists a canonical injective map $S\to N^c$

\begin{defn}\label{Def:essential}
A tribranched surface $\Sigma$ is called \emph{essential} if 
all conditions below are met.
\begin{enumerate}
\item\label{ESnoDisc} No branch of $\Sigma$ is a disc.
\item\label{ESnoBall} No component of $\Sigma$ is contained in a ball.
\item\label{ESPerchInj} For each block $N$ and each
branch $S$ in the closure of $N$ the associated homomorphism 
$\pi_1(S) \rightarrow \pi_1(N^c)$ is injective.
\item\label{ESnoSurj} For each block $N$ the inclusion $N\subset M$ does not
induce a surjection on the fundamental group.
\end{enumerate}
\end{defn}

It follows easily from the definitions that an essential surface is also an essential tribranched surface.

\begin{rem}
In \cite{Kitayama14} the notion of an essential tribranched surface is introduced for any compact 3-manifold, not necessarily closed. The definition is verbatim the same as above, except that one also demands that the extra condition
\begin{enumerate}
\item[(5)]\label{ESnotBdy} No component of $\Sigma$ is boundary parallel.
\end{enumerate}
holds.
In the remainder of this paper we will only be concerned with closed 3-manifolds.
\end{rem}

\section{Open books}
In this section we will recall some basic definitions and facts about open books in 3-manifolds.

\begin{constr}
Let $F$ be an orientable, compact, connected surface  and let $\phi\colon F \rightarrow F$ be an orientation preserving homeomorphism
that restricts to the identity in a neighbourhood of the boundary $\partial F$.
We form the mapping torus $\MT(F, \phi)$. This is the quotient 
\begin{align*}
\begin{array}{rcl}
\hspace{3.5cm} \MT(F, \phi) &:= &\IR \times F /{(t, \phi(x)) \sim (t+1, x)}.
\intertext{We have natural identifications  }
\partial \MT(F, \phi)& =& \MT(\partial F, \Id) = S^1\times \partial F ,
\end{array}
\end{align*}
which show that  the boundary $\partial \MT(F, \phi)$ is a disjoint union of  tori, where each boundary torus comes with a preferred homeomorphism to the standard torus.
Denote the collection of boundary components of $F$ by $\Comp{\partial F}$.
Into these we glue copies of $D^2\times S^1$, using the above preferred homeomorphisms of the boundary components of $\MT(F,\phi)$ with the standard torus $S^1\times S^1=\partial(D^2\times S^1)$.
We obtain a $3$-manifold without boundary
\begin{align*}
\Book(F, \phi) &:= \MT(F, \phi) \cup_{S^1\times \partial F } \bigcup_{b\in \Comp{\partial F}} (D^2 \times S^1)_b.
\end{align*}
Evidently, this construction is independent of the choice of orientation preserving identification 
\begin{align*}
\partial F \cong \bigcup_{b\in \Comp{\partial F}} (S^1)_b.
\end{align*}
For each boundary component $b \in \Comp{\partial F}$ we obtain an embedded curve
$\{0\} \times S^1  \subset (D^2 \times S^1)_b$. The union of these curves  is called the \emph{spine} of the open book.
We call the resulting
manifold $\Book(F, \phi)$ the \emph{open book}  with \emph{page} $F$ and
\emph{monodromy} $\phi$.
\end{constr}

\begin{defn}
An \emph{open book decomposition} of a $3$-manifold $M$ is a homeomorphism
to a manifold $\Book(F, \phi)$ as constructed above.
\end{defn}

One of the first non-trivial results in 3-manifold topology was Alexander's proof in 1920 \cite{Al20} that every closed $3$-manifold admits an open book decomposition. We refer to \cite{Et06} for a more modern account.

\begin{thm}[Alexander]
Every closed $3$-manifold admits an open book decomposition.
\end{thm}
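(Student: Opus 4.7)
The plan is to reduce to the case of $S^3$ via branched coverings. First I would invoke the (other) classical theorem of Alexander which states that every closed orientable $3$-manifold $M$ is a branched cover $\Map{p}{M}{S^3}$ branched along some link $L \subset S^3$. On the other side, $S^3$ itself carries an obvious open book decomposition, namely $\Book(D^2, \Id)$: indeed $\MT(D^2, \Id) = S^1 \times D^2$ is a solid torus, and gluing in one more $D^2 \times S^1$ along the boundary recovers $S^3$. In this model the binding is a standard unknot $U \subset S^3$ and the pages foliate $S^3 \setminus U$ by open discs, giving a fibration $\Map{\pi}{S^3 \setminus U}{S^1}$.

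Next I would put the branching link into a compatible position using yet another result of Alexander, his braid theorem: any link $L \subset S^3$ is isotopic to a closed braid about $U$, meaning $L \cap U = \emptyset$ and $L$ meets every page transversely in a fixed number $n$ of points. In this position, $\pi$ restricts to a fibration of $S^3 \setminus (L \cup U)$ over $S^1$ with fibre the $n$-times punctured disc. Pulling back via $p$ produces a fibration $\Map{\pi \circ p}{M \setminus p^{-1}(L \cup U)}{S^1}$ whose fibre is a branched cover $F$ of the disc branched over $L \cap D^2$ and whose monodromy $\phi$ is induced by the braid monodromy of $L$ lifted to the cover. Reinserting $p^{-1}(U)$ as a new binding then expresses $M$ as $\Book(F, \phi)$.

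The step I expect to require the most care is the local model near the binding: one must verify that a tubular neighbourhood of each component of $p^{-1}(U)$ is a solid torus $D^2 \times S^1$ fibred in the standard way by meridional discs, so that the gluing in the definition of $\Book(F, \phi)$ genuinely matches. This is where it is essential that $U$ is disjoint from the branching link $L$, so $p$ is an honest (unbranched) covering over a neighbourhood of $U$, and each component of $p^{-1}(U)$ inherits a product neighbourhood. A secondary technical point is that the page $F$ should be connected; if the resulting cover of the punctured disc breaks into several components one can either pass to $\Book$ of each component separately or first stabilise the open book on $S^3$ (replacing $D^2$ by a Hopf-banded surface) to enlarge the braid group action and force transitivity on the sheets, without changing the underlying $3$-manifold $S^3$.
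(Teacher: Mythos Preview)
The paper does not supply a proof of this theorem; it merely states the result and refers the reader to Alexander's 1920 paper and to Etnyre's survey for a modern account. Your outline via branched covers and the braid theorem is precisely the classical argument found in those references, so in spirit you are reproducing what the paper cites rather than offering an alternative route.

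A couple of minor wording issues in your sketch. The fibre of $\pi\circ p$ on $M\setminus p^{-1}(L\cup U)$ is an \emph{unbranched} cover of the $n$-punctured disc; it only becomes the branched cover $F$ of $D^2$ once you restore $p^{-1}(L)$. Near the binding the pages meet each $D^2\times S^1$ in radial annuli, not meridional discs; the genuine check is that the meridian of each component of $\nu(p^{-1}(U))$ is a section of the pulled-back fibration, which holds because $p$ is an honest cover there and hence sends meridian to (a copy of) the meridian downstairs. Finally, connectedness of $F$ needs no stabilisation trick: if $M$ is connected the monodromy permutes the components of the fibre transitively, and then $\Book(F,\phi)\cong\Book(F_1,\phi^{k}|_{F_1})$ for any one component $F_1$.
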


Given an open book decomposition of a $3$-manifold one can obtain a new one whose page has a smaller Euler characteristic using stabilisation, see e.g.\ \cite[Corollary~2.21]{Et06}. We thus obtain the following slightly stronger statement.

\begin{thm}\label{thm:openbookexists}
Given any closed $3$-manifold $M$ and $k\in \Bbb{Z}$ there exists  an open book decomposition of $F$ such that the page $F$ satisfies $\chi(F)\leq k$. 
\end{thm}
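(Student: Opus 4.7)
The strategy is simply to begin with any open book decomposition of $M$ and then reduce the Euler characteristic of its page by iterating the standard positive stabilisation. First, Alexander's theorem provides an open book decomposition $M \cong \Book(F_0, \phi_0)$; set $c := \chi(F_0)$. If $c \leq k$ there is nothing to do, so I may assume $c > k$.

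Next, I would recall the positive stabilisation procedure for an open book $\Book(F,\phi)$: pick a properly embedded arc $\alpha \subset F$ with endpoints on $\partial F$, attach a $2$-dimensional $1$-handle $H$ to $F$ along small neighbourhoods of $\partial \alpha \subset \partial F$ to obtain a new page $F' := F \cup H$, and let $\gamma \subset F'$ be the simple closed curve formed by $\alpha$ together with the core of $H$. Extending $\phi$ by the identity across $H$ and then composing with the right-handed Dehn twist $T_\gamma$ produces a new monodromy $\phi' := T_\gamma \circ \phi$. The key fact, proved for instance in \cite[Corollary~2.21]{Et06}, is that
\[ \Book(F,\phi) \;\cong\; \Book(F',\phi'). \]
Attaching a single $2$-dimensional $1$-handle to a compact, connected, orientable surface keeps it connected and orientable and lowers the Euler characteristic by exactly one, so $\chi(F') = \chi(F) - 1$.

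Iterating the stabilisation $c - k$ times therefore produces an open book decomposition of $M$ whose page $F$ satisfies $\chi(F) = c - (c-k) = k$, which is the required inequality (indeed, performing additional stabilisations would give any smaller value). The only point requiring care is that each stabilisation step genuinely preserves the ambient $3$-manifold $M$, which is precisely the content of the cited corollary of Etnyre; beyond correctly invoking this standard stabilisation theorem no substantive obstacle arises.
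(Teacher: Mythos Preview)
Your proposal is correct and follows exactly the approach the paper takes: the paper simply remarks that Alexander's theorem yields some open book, and then invokes stabilisation (citing the same \cite[Corollary~2.21]{Et06}) to decrease the Euler characteristic of the page. Your write-up is in fact more detailed than the paper's one-sentence justification, spelling out the $1$-handle attachment and the $\chi(F') = \chi(F)-1$ computation, but the underlying argument is identical.
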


\section{From open books to tribranched surfaces}

In the following we construct a tribranched surface in an open book.
\begin{constr}[Naive tribranched surface]
Let $F$ be a compact, orientable, connected surface  and let $\phi\colon F \rightarrow F$ be an orientation preserving homeomorphism
that restricts to the identity in a neighbourhood of the boundary $\partial F$.
The corresponding  open book $\Book(F, \phi)$ is given
by a gluing
\begin{align*}
\Book(F, \phi) = \left( \IR \times F / \sim \right) \cup_{S^1\times \partial F } 
	\bigcup_{b\in \Comp{\partial F}} (D^2 \times S^1)_b.
\end{align*}
We construct subsets $\Sigma_{I,b}$ in each copy $(D^2 \times S^1)_b$ as follows
\begin{align*}
\Sigma_{I, b} := \{ r e^{2\pi i k/3} : k=0,1,2 \text{ and } 0 \leq r \leq 1 \}\times S^1\,\, \subset (D^2 \times S^1)_b.
\end{align*}
These subsets can be completed to a tribranched
surface $\Sigma$ in $\Book(F, \phi)$ by setting
\begin{align*}
\Sigma\, :=\, \{0, \tfrac{1}{3}, \tfrac{2}{3}\} \times F\,\, \cup\,\, \bigcup_{b\in \Comp{\partial F}} \Sigma_{I,b}.
\end{align*}
\end{constr}

\begin{lem}\label{lem:naive}
If  $\chi(F)\leq 0$, then the tribranched surfaces $\Sigma \subset \Book(F, \phi)$ constructed above
fulfil conditions $(\ref{ESnoDisc})$ and $(\ref{ESPerchInj})$ stated in Definition \ref{Def:essential}. 
\end{lem}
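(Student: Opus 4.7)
The plan is to make the branches and blocks of $\Sigma$ explicit inside $\Book(F,\phi)$ and then read off conditions (\ref{ESnoDisc}) and (\ref{ESPerchInj}) directly.

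First I will identify the branches. The branching set $C(\Sigma)$ is the disjoint union $\bigsqcup_{b\in\Comp{\partial F}}\{0\}\times (S^1)_b$ of spine circles, one in each solid torus, so removing $C(\Sigma)$ detaches the three radial leaves inside each $\Sigma_{I,b}$. Hence each connected component of $\Sigma\setminus C(\Sigma)$ consists of exactly one page $\{k/3\}\times F$ together with, for every boundary component $b$, the corresponding open-ended annulus $\{re^{2\pi ik/3}:0<r\leq 1\}\times (S^1)_b$ glued along $\{k/3\}\times (S^1)_b$. These annuli are half-open collars on the boundary of $F$, so the compactification $S^c$ obtained by restoring the spine circles is homeomorphic to $F$ itself. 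Condition (\ref{ESnoDisc}) follows at once: if any branch were a disc, then $F$ would be a disc, contradicting $\chi(F)\leq 0$.

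Next I will identify the blocks using the natural open book projection $p\colon \Book(F,\phi)\setminus\mathrm{spine}\to S^1$, defined by $(t,x)\mapsto t$ on the mapping torus and $(re^{i\theta},s)\mapsto \theta/(2\pi)$ on each solid torus. The fiber of $p$ is $F$ with a half-open collar attached at each boundary component, hence homeomorphic to the open surface $\mathring F$. The three pages $\{0,\tfrac13,\tfrac23\}\times F$ divide $S^1$ into three open arcs, and since $\phi$ is the identity near $\partial F$ and no arc wraps around $S^1$, $p$ restricts to a trivial $\mathring F$-bundle over each arc. Thus each block $N$ is homeomorphic to $\mathring F\times (0,1)$, whose abstract compactification is $N^c\cong F\times [0,1]$.

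Condition (\ref{ESPerchInj}) is then immediate: the closure of a block $N$ meets exactly two branches, namely the two pages bounding the arc underlying $N$, and under the identification $N^c\cong F\times [0,1]$ their compactifications $S^c\cong F$ include as $F\times\{0\}$ and $F\times\{1\}$. Either inclusion is a homotopy equivalence, so the induced map $\pi_1(S)\to \pi_1(N^c)$ is an isomorphism, in particular injective. The only step requiring care is the identification $N^c\cong F\times [0,1]$, which amounts to checking that $p$ is a trivial bundle over each arc; once this is granted, both conditions reduce to standard facts about $F$ and $F\times [0,1]$.
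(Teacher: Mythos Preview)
Your proof is correct and follows essentially the same approach as the paper's: you identify the branching set as the spine, the branches as copies of $\operatorname{int}(F)$, and the blocks as copies of $(0,1)\times \operatorname{int}(F)$, from which conditions (\ref{ESnoDisc}) and (\ref{ESPerchInj}) follow immediately. The only difference is that you spell out the identification of the blocks via the open book projection $p$, whereas the paper simply asserts the product structure; this is added detail, not a different argument.
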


\begin{proof}
The branching set is the spine of the open book.
All branches are diffeomorph to the interior $\operatorname{int}(F)$ of $F$ and therefore by our assumption on $\chi(F)$ none of the branches is a disc.
Each block is homeomorphic to $(0,1) \times \operatorname{int}(F)$. If $S$ is a branch in the closure of a block $N$, then the inclusion $S\to N^c$   corresponds
to the inclusion $\{0\}\times \operatorname{Int}(F)  \subset [0,1] \times F$. As a result we obtain (3).
\end{proof}

The combination of Theorem \ref{thm:openbookexists} and Lemma \ref{lem:naive} shows that any closed $3$-manifold admits a tribranched surface that satisfies 
conditions $(\ref{ESnoDisc})$ and $(\ref{ESPerchInj})$ of Definition \ref{Def:essential}. 
On the other hand, not every closed $3$-manifold admits an essential tribranched surface.
For example it is straightforward to see that no spherical $3$-manifold can admit a tribranched surface that satisfies conditions  $(\ref{ESnoDisc})$,  $(\ref{ESnoBall})$ and $(\ref{ESnoSurj})$.
\smallskip

In the following we will modify the above construction to obtain an essential tribranched surface 
in any open book $\Book(F, \phi)$ under the additional hypothesis that the rank of the fundamental group is at least four.
This is done by chopping down the blocks of the above tribranched surface
so that it becomes impossible for them to surject on the fundamental group.
We first collect some facts on pants decomposition of surfaces.

\begin{defn}
\begin{enumerate}
\item A collection $\mathcal{C}$ of disjointly embedded curves into 
a surface $F$ is called a \emph{pants decomposition} if $F$ cut along $\mathcal{C}$ 
is a disjoint union of thrice punctured spheres.
\item A tuple $(\mathcal{C}_0, \ldots, \mathcal{C}_n)$ of pants decompositions
is a \emph{path}
connecting $\mathcal{C}_0$ and $\mathcal{C}_n$ if for each $0 \leq k < n$
there is a curve $\gamma \in \mathcal{C}_k$ and a curve 
$\gamma' \in \mathcal{C}_{k+1}$
such that $\mathcal{C}_k \setminus \gamma$ is isotopic
 to $\mathcal{C}_{k+1} \setminus \gamma'$.
\end{enumerate}
\end{defn}

It is well-known that any compact, orientable, connected surface $F$ with negative Euler characteristic admits a pants decomposition. 
The following theorem is proved by Hatcher--Lochak--Schneps
\cite[Section~2]{HLS00}, building on earlier work of Hatcher--Thurston
\cite[Appendix]{Hatcher80}.

\begin{thm}
Let $F$ be a compact, orientable, connected surface with  $\chi(F) <0$.
Suppose $\mathcal{C}$ and $\mathcal{C}'$ are pants decompositions.
Then there exists a path connecting $\mathcal{C}$ to $\mathcal{C}'$.
\end{thm}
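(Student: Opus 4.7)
The plan is to proceed by induction on the geometric intersection number $I := i(\mathcal{C}, \mathcal{C}')$ of the two pants decompositions, realized by representatives in minimal position.

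For the base case $I = 0$, every curve of $\mathcal{C}'$ is disjoint from $\mathcal{C}$ and hence lies inside a single pair of pants of $F \setminus \mathcal{C}$. Since every essential non-peripheral simple closed curve in a pair of pants is isotopic to a boundary component, each curve of $\mathcal{C}'$ is isotopic in $F$ to some curve of $\mathcal{C}$. Because $|\mathcal{C}| = |\mathcal{C}'| = 3g-3+n$ and the curves in a pants decomposition are pairwise non-isotopic, this assignment is a bijection, forcing $\mathcal{C} = \mathcal{C}'$ up to isotopy; the trivial length-zero path then suffices.

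For the inductive step $I > 0$, choose $\gamma \in \mathcal{C}$ with $i(\gamma, \mathcal{C}') > 0$ and let $N_\gamma \subset F$ be the subsurface formed by the closure of the (one or two) pairs of pants of $F \setminus \mathcal{C}$ adjacent to $\gamma$. Depending on whether the two sides of $\gamma$ lie in distinct pants or correspond to two boundary circles of a single pair of pants that have been identified, $N_\gamma$ is homeomorphic to a four-holed sphere $S_{0,4}$ or a once-holed torus $S_{1,1}$. An elementary move at $\gamma$, in the sense of the theorem, corresponds precisely to replacing $\gamma$ by any other non-peripheral essential simple closed curve $\gamma'' \subset N_\gamma$ while keeping the remaining curves of $\mathcal{C}$ fixed. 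Since those other curves are unchanged, it suffices to choose $\gamma''$ with $i(\gamma'', \mathcal{C}') < i(\gamma, \mathcal{C}')$; this strictly drops $I$, and iterating finitely often produces the desired path.

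The main obstacle is the existence of a reducing $\gamma''$. The isotopy classes of non-peripheral simple closed curves in each of $S_{0,4}$ and $S_{1,1}$ are parametrized by $\IQ \cup \{\infty\}$ via the Farey graph, with geometric intersection numbers given by $i(p/q, r/s) = 2|ps-qr|$ and $|ps-qr|$ respectively. The essential arcs and closed curves of $\mathcal{C}' \cap N_\gamma$ determine a finite collection of ``slopes'' in $\IQ \cup \{\infty\}$, and a short continued-fraction argument shows that moving $\gamma$ one Farey-step in the direction of those slopes yields a curve $\gamma''$ with strictly smaller total intersection with $\mathcal{C}'$. This is precisely the local intersection reduction underlying the proofs of Hatcher-Thurston \cite{Hatcher80} and Hatcher-Lochak-Schneps \cite{HLS00}.
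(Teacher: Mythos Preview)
The paper does not actually prove this theorem; it simply quotes the result and cites Hatcher--Lochak--Schneps \cite{HLS00} and Hatcher--Thurston \cite{Hatcher80}. So there is no ``paper's own proof'' to compare against. Your sketch is the standard connectivity argument for the pants graph via induction on geometric intersection number, which is precisely the strategy underlying the cited references (and is also the treatment in, e.g., Farb--Margalit's \emph{Primer on Mapping Class Groups}). In that sense your approach is aligned with what the paper defers to.

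One comment on the argument itself. The base case and the overall inductive scheme are fine. The genuine content lies in the existence of the reducing curve $\gamma''$, and here you essentially punt back to the same references the paper cites: the ``short continued-fraction argument'' is asserted but not carried out. This is acceptable as a proof outline, but be aware that the step is not entirely formal. The intersection $\mathcal{C}' \cap N_\gamma$ is a system of properly embedded arcs (and possibly closed curves), not a single slope, so one must either argue via an outermost-arc surgery on $\gamma$ or show that some Farey neighbour of the slope of $\gamma$ lowers the total weighted intersection with this arc system; both work, but neither is a one-liner. If you want the sketch to stand on its own rather than as a pointer to the literature, that step should be fleshed out.
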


\begin{rem}
Let $\Book(F, \phi)$ be an open book.
If $\mathcal{C}$ is a pants decomposition of $F$,
then the following is
another pants decomposition:
\begin{align*}
\phi(\mathcal{C}) := \{\phi(\gamma) : \gamma \in \mathcal{C} \}.
\end{align*}
The theorem above guarantees us that there is a path between 
$\mathcal{C}$ and $\phi(\mathcal{C})$.
\end{rem}

First we construct the essential tribranched surface away from the spine and then 
describe how to extend it in a neighbourhood of the spine.
\begin{constr}[Outer tribranched surface]
Let $\Book(F, \phi)$ be an open book with $\chi(F)<0$. 
Choose a pants decomposition $\mathcal{C}$ of $F$.
Let $(\mathcal{C}_0, \ldots, \mathcal{C}_n)$ be a path connecting $\mathcal{C}$
with $\phi(\mathcal{C})$.
We start off with the set 
\begin{align*}
F_n := \{\tfrac{0}{n}, \tfrac{1}{n} \ldots, \tfrac{n-1}{n}\} \times F \,\subset\, \IR \times F / {(t, \phi(x)) \sim (t+1, x)}.
\end{align*}
For $0 \leq k < n$ 
denote $\mathcal{D}_k := \mathcal{C}_k \cap \mathcal{C}_{k+1}$. For each curve $\gamma \in \bigcup_{k}\mathcal{D}_k$  we 
pick a push-off $\gamma^+$ on $F$ and an isotopy $\Map {m_\gamma} {S^1\times [0,1]} \Sigma$
with
\begin{enumerate}
\item $m_\gamma(-,0) = \gamma$ and $m_\gamma(-,1) = \gamma^+$
\item $m_\gamma(-,t)$ is independent of $t$ in a neighbourhood
of the endpoints $0$ and $1$.
\end{enumerate}
For each $k$ we can and will pick the  $m_\gamma$ such that the images are disjoint.
Now we chop the complement of $F_n$ into smaller pieces by adding the following
horizontal pieces:
\begin{align*}
H_{k,\gamma} := \left\{ \left(\tfrac{k+t}{n}, m_\gamma(t) \right) : t \in [0,1] \right\} \subset \IR \times F / \sim.
\end{align*}
for $0 \leq k < n$ and $\gamma \in \mathcal{D}_k$.
The \emph{outer tribranched surface} is the subset
\begin{align*}
\Sigma_O := F_n \cup \bigcup_{ \substack{0 \leq k < n,\\ \gamma \in \mathcal{D}_k}} H_{k,\gamma}.
\end{align*}
\end{constr}

\begin{figure}[h]
\begin{center}
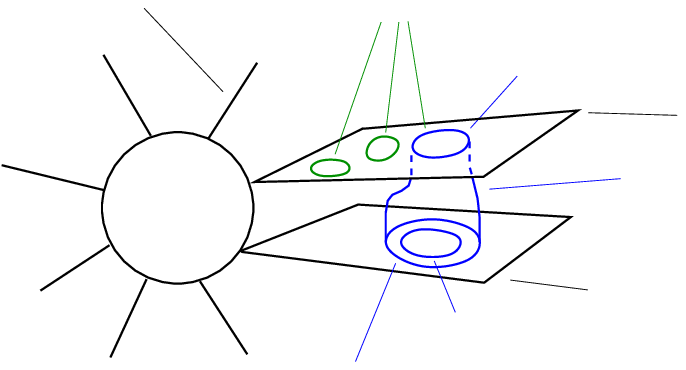
\caption{Schematic picture of the outer tribranched surface.} \label{fig:outer}
\end{center}
\end{figure}



\begin{thm}\label{thm:essentialtribranched}
Let $\Book(F, \phi)$ be an open book with $\chi(F)<0$ such that its fundamental group
is at least of $\rank \pi_1(\Book(F, \phi)) \geq 4$.
Let $\Sigma_O$  be the subset constructed above.
Then the subset 
\begin{align*}
\begin{array}{rcl}
\Sigma& :=& \Sigma_O \quad \cup\quad  \bigcup_{b \in \Comp{\partial F}} (S^1\times S^1)_b
\intertext{is an essential tribranched surface in}
 \hspace{3cm} \Book(F, \phi)&=&\IR \times F / \sim\quad \cup \quad \bigcup_{b \in \Comp{\partial F}} (D^2 \times S^1)_b.
\end{array}\end{align*}
\end{thm}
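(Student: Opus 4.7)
The plan is to verify the four defining conditions of Definition~\ref{Def:essential} after first describing the branches and blocks of $\Sigma$ explicitly. The branches fall into three classes: subsurfaces of each slice $F \times \{k/n\}$ cut out by the branching curves $\mathcal{D}_k \cup \mathcal{D}_{k-1}^+$ (pairs of pants, together with annuli between a curve and its push-off whenever a curve of $\mathcal{C}_k$ lies in both $\mathcal{D}_k$ and $\mathcal{D}_{k-1}$); the horizontal annuli $H_{k,\gamma}$; and the $n$ annular strips into which each torus $(S^1 \times S^1)_b$ is cut by the $F$-slice boundary circles. The blocks are of two kinds: the solid tori $(D^2 \times S^1)_b$, with $\pi_1 = \IZ$; and the pieces of $[k/n, (k+1)/n] \times F$ separated by the surfaces $H_{k,\gamma}$, each of which deformation retracts onto a component $P$ of $F$ cut along $\mathcal{D}_k$. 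Since $\mathcal{D}_k$ is a pants decomposition minus one curve, each such $P$ is either a pair of pants, a four-holed sphere, or a once-punctured torus, in all cases with fundamental group free of rank at most three.

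Conditions~(\ref{ESnoDisc}) and~(\ref{ESnoBall}) are then immediate: every branch has non-positive Euler characteristic by the classification above, and every component of $\Sigma$ meets one of the tori $(S^1 \times S^1)_b$, which does not bound a ball on either side in $\Book(F,\phi)$ (one side is the solid torus block, the other has non-trivial fundamental group because the page $F$ has $\chi(F) < 0$). For Condition~(\ref{ESPerchInj}) I check $\pi_1$-injectivity branch by branch. For a solid torus block, every branch is an annular strip of $(S^1 \times S^1)_b$ whose core is a longitude $\{*\} \times S^1 \subset D^2 \times S^1$; this generates $\pi_1((D^2 \times S^1)_b) = \IZ$. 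For a mapping-torus-type block that retracts onto a factor $[0,1] \times P$, the inclusion $S \hookrightarrow N^c$ factors through the inclusion $S \hookrightarrow P$, and since all cutting curves are essential simple closed curves in $F$, this inclusion is $\pi_1$-injective by standard surface topology.

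The key place where the rank hypothesis enters is Condition~(\ref{ESnoSurj}). By the description above every block has fundamental group of rank at most three, so the inclusion-induced map $\pi_1(N) \to \pi_1(\Book(F,\phi))$ cannot be surjective when $\rank \pi_1(\Book(F,\phi)) \geq 4$. The main obstacle I foresee is the careful bookkeeping required in Condition~(\ref{ESPerchInj}), especially for the annular branches that arise when a curve of $\mathcal{C}_k$ appears in both $\mathcal{D}_k$ and $\mathcal{D}_{k-1}$, and in verifying that at every branching circle---including where the tori $(S^1 \times S^1)_b$ meet the $F$-slice boundary circles---the local model of $\Sigma$ really is the tribranched $Y$-shape rather than a smooth surface crossed transversely.
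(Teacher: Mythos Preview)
Your proof is correct and follows essentially the same route as the paper's: enumerate the branches and blocks of $\Sigma$, observe that each branch has non-positive Euler characteristic and each block has fundamental group of rank at most three, and then verify the four conditions of Definition~\ref{Def:essential} directly. Your bookkeeping is in fact slightly more careful than the paper's in two places---you record the once-punctured-torus block type (arising from an A-move in the pants path) and the annular branches on the boundary tori $(S^1\times S^1)_b$, both of which the paper's enumeration omits---while conversely your slice-branch list misses the four-holed-sphere/once-punctured-torus piece that appears when $\mathcal{D}_k=\mathcal{D}_{k-1}$; none of these discrepancies affects the argument.
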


\begin{proof}
We first show that $\Sigma$ is a tribranched surface. 
A pre-tribranched surface $\Sigma$ in $M$ is a closed subset of $M$ locally modelled on the collection
of the following subsets:
\begin{enumerate}
\item[(a)] $\IR^2 \times \{0\} \subset \IR^3$,
\item[(b)] $\IR \times \{ r e^{2\pi i k/3} : k=0,1,2 \text{ and } r \in \IR_{\geq 0} \} \subset \IR \times \IC = \IR^3$,
\item[(c)]\label{tmodel} $\{0\} \times \IR^2 \cup \left(\IR_{\geq 0} \times \IR \times \{0\}\right) \subset \IR^3$.
\end{enumerate}
Here we  added (c) to the original definition. But it is clear that (c) is just a special case of (b).
Furthermore, a \emph{tribranched surface} is a pre-tribranched surface such that
\begin{enumerate}
\item each branch is an orientable surface and
\item each component $C$ of the branching set $C(\Sigma)$ has an open neighbourhood $U$
modelled on either
\begin{enumerate}
\item $S^1 \times \{ r e^{2\pi i k/3} : k=0,1,2 \text{ and } r \in \IR_{\geq 0} \} \subset S^1 \times \IC$ or
\item \label{ctmodel}$\{0\} \times S^1 \times \IR \cup \left(\IR_{\geq 0}\times S^1 \times \{0\}\right) \subset \IR \times S^1 \times \IR$.
\end{enumerate}
\end{enumerate}
Here we added (b), but again it is clear that (b) is just a special case of (a).
Using this definition of a tribranched surface it is now straightforward to show that $\Sigma$ is indeed a tribranched surface.

It thus remains to show that $\Sigma$ is an essential tribranched surface.
In the following we say that  a subsurface $S$ of $F$ is $\pi_1$-injective, if $\pi_1(S)\to \pi_1(F)$ is a monomorphism.  It is clear from the construction that each branch of $\Sigma$ is one of the following four types:
\begin{enumerate}
\item a component of some $H_{k,\gamma}$, i.e.\ an annulus,
\item $\{\frac{k}{n}\}\times A$ where $A$ is a $\pi_1$-injective annulus on $F$ cobounding some $\gamma$ and the corresponding push-off $\gamma^+$,
\item $\{\frac{k}{n}\}\times S$ where $S$ is a $\pi_1$-injective  thrice punctured sphere on $F$, 
\item $\{\frac{k}{n}\}\times T$ where $T$ is a $\pi_1$-injective subsurface of $F$ that is  the result of gluing two thrice punctured spheres on $F$ along the one component of $\mathcal{C}_k \setminus \mathcal{C}_{k+1}$. 
\end{enumerate}
Note that the last type of branch is a four times punctured sphere. 
It  follows from this list of branches that  (\ref{ESnoDisc}) is satisfied.  Since the surface $\Sigma$ is connected, it follows easily that (\ref{ESnoBall}) is satisfied. 

It is straightforward to see there are precisely two types of blocks:
\begin{enumerate}
\item blocks  of the form $ (\frac{k}{n},\frac{k+1}{n})\times S$, where $S$ is a $\pi_1$-injective subsurface of $F$ that is either a  thrice punctured sphere, or a four-times punctured sphere,
\item blocks that are the interiors of the solid tori $(D^2\times S^1)_b$, $b\in \Comp{\partial F}$.
\end{enumerate}
Together with the above description of branches we easily see that  $\Sigma$ satisfies (\ref{ESPerchInj}).

By the above description of blocks, the fundamental group of each block is generated by at most three elements.
It follows from our hypothesis  $\rank \pi_1(\Book(F, \phi)) \geq 4$ that (\ref{ESnoSurj}) is also satisfied.
Summarising we showed that $\Sigma$ is an essential tribranched surface.
\end{proof}

Now we are finally in a position to prove Theorem~\ref{mainthm}.

\begin{proof}[Proof of Theorem~\ref{mainthm}]
Let $M$ be a closed $3$-manifold with $\rank (\pi_1(M)) \geq 4$. 
By Theorem~\ref{thm:openbookexists} the manifold $M$ admits an open book decomposition $\Book(F, \phi)$  with $\chi(F)<0$.
Thus Theorem~\ref{thm:essentialtribranched}  gives us the desired essential tribranched surface.
\end{proof}

\bibliography{biblio}{}
\bibliographystyle{alpha}
\end{document}